\newtheorem{theorem}{Theorem}
\newtheorem{lemma}{Lemma}
\newtheorem{corollary}{Corollary}
\newtheorem{proposition}{Proposition}
\theoremstyle{definition}
\newtheorem*{definition}{Definition}
\theoremstyle{remark}
\def\boxit#1{\medskip\vbox{\hrule \hbox{\vrule\kern15pt\vbox{\kern5pt
\vbox{\advance\hsize -30pt #1\par} \kern5pt}\kern15pt\vrule}\hrule} \medskip}
\renewcommand{\P}{\mathcal{P}}
\newcommand{\M}{\mathcal{M}}
\renewcommand{\L}{\mathcal{L}}
\newcommand{\floor}[1]{\left\lfloor#1\right\rfloor}
\newcommand{\Mod}[1]{\ (\mathrm{mod}\ #1)}
\title{Partitioning the power set of $[n]$ into $C_k$-free parts}
\author{Eben Blaisdell,$^{1}$ Andr\'as Gy\'arf\'as,$^{2}$ Robert A. Krueger,$^{3}$ Ronen Wdowinski$^{4}$}
\begin{document}

\maketitle
\noindent\footnotetext[1]{Department of Mathematics, Bucknell University, Lewisburg, Pennsylvania. \texttt{emb038@bucknell.edu}.}%
\noindent\footnotetext[2]{Alfr\'ed R\'enyi Institute of Mathematics, Hungarian Academy of Sciences, Budapest, P.O. Box 127, Budapest, Hungary, H-1364. \texttt{gyarfas.andras@renyi.mta.hu}.}%
\noindent\footnotetext[3]{Department of Mathematics, Miami University, Oxford, Ohio. \texttt{kruegera@miamioh.edu}.}%
\noindent\footnotetext[4]{Department of Mathematics, Rice University, Houston, Texas. \texttt{rmw5@rice.edu}.}%
\begin{abstract}
We show that for $n \geq 3, n\ne 5$, in any partition of $\mathcal{P}(n)$, the set of all subsets of $[n]=\{1,2,\dots,n\}$, into $2^{n-2}-1$ parts, some part must contain a triangle --- three different subsets $A,B,C\subseteq [n]$ such that $A\cap B$, $A\cap C$, and $B\cap C$ have distinct representatives. This is sharp, since by placing two complementary pairs of sets into each partition class, we have a partition into $2^{n-2}$ triangle-free parts.  We also address a more general Ramsey-type problem: for a given graph $G$, find (estimate) $f(n,G)$, the smallest number of colors needed for a coloring of $\mathcal{P}(n)$, such that no color class contains a Berge-$G$ subhypergraph. We give an upper bound for $f(n,G)$ for any connected graph $G$ which is asymptotically sharp (for fixed $k$) when $G=C_k, P_k, S_k$, a cycle, path, or star with $k$ edges. Additional bounds are given for $G=C_4$ and $G=S_3$.
\end{abstract}

\section{Introduction and results}

Hypergraph Ramsey problems usually address the existence of large monochromatic structures in colorings of the edges of $K_n^r$, the complete $r$-uniform hypergraph. It is rare that monochromatic structures are sought in colorings of hypergraphs containing {\em all subsets} of $[n]$, $\P(n)$. An exception is the Finite Unions Theorem of Folkman, Rado, Sanders \cite{GRS}. A more recent research in this direction is by Axenovich and Gy\'arf\'as \cite{AGY}, where Ramsey numbers of Berge-$G$ hypergraphs were studied for several graphs $G$ in colorings of $\P(n)$.  Ramsey numbers of Berge-$G$ hypergraphs in the {\em uniform case} have been investigated also in \cite{GMOV,STWZ}.

A hypergraph $H=(V,F)$ is called Berge-$G$ if $G=(V,E)$ is a graph and there exists a bijection $g: E(G)\mapsto E(H)$ such that for $e\in E(G)$ we have $e\subseteq g(e)$.
Note that for a given graph $G$ there are many Berge-$G$-hypergraphs. Berge-$G$ hypergraphs were defined by Gerbner and Palmer \cite{GP} to extend the notion of paths and cycles in hypergraphs introduced by Berge in \cite{B}. In particular, a Berge-$C_3$ hypergraph consists of three subsets $A,B,C\subseteq [n]$ such that $A\cap B,A\cap C,B\cap C$ have distinct representatives. When there is no confusion, we will often refer to a Berge-$G$ hypergraph simply as `a $G$.' The graphs $C_k,P_k,S_k$ denote cycle, path, and star with $k$ edges, respectively. It is customary to use the names \emph{triangle} and \emph{claw} for the graphs $C_3$ and $S_3$, respectively.

A hypergraph $H$ with vertex set $[n]$ and whose edges are sets from $\P(n)$ is called {\em $G$-free}, if it does not contain any subhypergraph isomorphic to a Berge-$G$ hypergraph. The {\em intersection graph} of a hypergraph $H$ is a graph $G$ whose vertices represent edges of $H$ and where there is an edge in $G$ if and only if the corresponding edges of $H$ have non-empty intersection. Note that if the intersection graph of $H$ has no subgraph isomorphic to the intersection graph of $G$ (that is, the line graph of $G$), then $H$ is $G$-free. The reverse statement is not true: the intersection graph of the hypergraph $H$ with edges $\{1,2\},\{1,2,3\},\{1,2,4\}$ is a triangle but $H$ is triangle-free.

To define the Ramsey-type problem we address here, let $f(n,G)$ be the smallest number of colors in a coloring of $\P(n)$ such that all color classes are $G$-free. In other words, in every coloring of $\P(n)$ with $f(n,G)-1$ colors, there is a Berge-$G$ subhypergraph in some color class. We use the terms {\em coloring, partitioning} of $\P(n)$ in the same sense. Since the presence of empty sets and singleton sets do not influence whether a coloring is $G$-free, we usually construct colorings of $\P^*(n)$, what we define to be $\P(n)$ with the empty set and the singletons removed. However, the following natural partition of the whole power set of $[n]$ is useful.
For every $A\subseteq [n-1]$, the part defined by $A$ is
$$\{X_1(A)=A,X_2(A)=[n]\setminus X_1(A), X_3(A)=A\cup \{n\}, X_4(A)=[n]\setminus X_3(A)\}.$$
Since $A$ and $[n-1]\setminus A$ define the same part, we have $2^{n-2}$ parts (each of size four).
This partition was used in \cite{AGY} to show that $f(n,C_3)\le 2^{n-2}$. Observing that
$$X_1(A)\cap X_2(A)=X_3(A)\cap X_4(A)=X_1(A)\cap X_4(A)=\emptyset,$$
these parts are $C_3$-free, $C_4$-free and $S_3$-free.
Thus we have a natural upper bound for three small graphs:
\begin{proposition}\label{basepart} $f(n,G)\le 2^{n-2}$ for $G\in \{C_3,C_4,S_3\}$.
\end{proposition}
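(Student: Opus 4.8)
The plan is to verify that the explicitly given partition of $\P(n)$ has the claimed properties, so the proof is essentially a direct check rather than an existential argument. Since the partition into the parts $\{X_1(A),X_2(A),X_3(A),X_4(A)\}$ over $A\subseteq [n-1]$ (with $A$ and $[n-1]\setminus A$ identified) consists of exactly $2^{n-2}$ parts, exhibiting such a partition in which every part is $G$-free immediately yields $f(n,G)\le 2^{n-2}$. Thus the whole task reduces to showing that a single typical part $P=\{X_1,X_2,X_3,X_4\}$ contains no Berge-$C_3$, no Berge-$C_4$, and no Berge-$S_3$.

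The key structural input is the displayed identity $X_1\cap X_2 = X_3\cap X_4 = X_1\cap X_4 = \emptyset$, which I would first re-derive for completeness: $X_2$ is the complement of $X_1$ and $X_4$ is the complement of $X_3$, so $X_1\cap X_2=\emptyset$ and $X_3\cap X_4=\emptyset$; moreover $X_4=[n]\setminus(A\cup\{n\})$ is disjoint from $X_1=A$ since $A\subseteq A\cup\{n\}$, giving $X_1\cap X_4=\emptyset$. Hence the intersection graph of the four-edge hypergraph $P$ has at most the edges $X_1X_3$, $X_2X_3$, and $X_2X_4$ present (the pairs $X_1X_2$, $X_3X_4$, $X_1X_4$ being empty intersections). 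This means the intersection graph of $P$ is a subgraph of the path $X_1$–$X_3$–$X_2$–$X_4$ on four vertices, a graph with no cycle, no triangle, and no vertex of degree three.

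I would then invoke the criterion stated in the excerpt: if the intersection graph of $H$ contains no copy of the line graph of $G$, then $H$ is $G$-free. Since the line graphs of $C_3$, $C_4$, and $S_3$ are respectively a triangle ($K_3$), a $4$-cycle, and again a triangle ($L(S_3)=K_3$), and none of these appears as a subgraph of a path on four vertices, it follows at once that $P$ is $C_3$-free, $C_4$-free, and $S_3$-free. Carrying out this observation for each part (noting the argument is identical for every $A$) completes the proof. I expect no genuine obstacle here; the only point requiring minor care is confirming that the three listed disjointness relations are exactly the ones that force the intersection graph to be a subpath, and that the relevant line graphs are as claimed, after which the general criterion does all the remaining work.
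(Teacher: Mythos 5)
Your proof is correct and takes essentially the same approach as the paper: the paper exhibits exactly this partition into $2^{n-2}$ four-element parts, records the same three empty intersections, and concludes that each part is $C_3$-, $C_4$-, and $S_3$-free via the line-graph criterion stated in the introduction. You have simply made explicit the verification that the intersection graph of each part is a subgraph of a four-vertex path and that the relevant line graphs ($K_3$, $C_4$, $K_3$) do not embed in it.
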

How sharp is this upper bound for the three small graphs involved? The easiest lower bound comes for the claw.
\begin{proposition}\label{claw} $2^{n-2} - n/2 \leq f(n, S_3)$. In general, $\frac{2^{n-1}}{k-1} - O\left(n^{k-2}\right) \le f(n,S_k)$.
\end{proposition}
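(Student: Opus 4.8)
The plan is to reduce the whole statement to a single local ``degree'' lemma: \emph{in any $S_k$-free family $\mathcal F\subseteq\P(n)$, every element $x$ lies in at most $k-1$ sets of size at least $k$.} Granting this, I would prove both inequalities by a one-element count. Fix the element $n$ and consider the sets of size $\ge k$ that contain it; there are exactly $2^{n-1}-\sum_{j=0}^{k-2}\binom{n-1}{j}=2^{n-1}-O(n^{k-2})$ of them, since the discarded sets containing $n$ are precisely those of size $\le k-1$. In any coloring of $\P(n)$ whose color classes are $S_k$-free, each class meets this family in at most $k-1$ sets by the lemma, so the number of colors is at least $\frac{2^{n-1}-O(n^{k-2})}{k-1}=\frac{2^{n-1}}{k-1}-O(n^{k-2})$. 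For $k=3$ the discarded count is exactly $1+(n-1)=n$, so this gives the sharp value $\frac{2^{n-1}-n}{2}=2^{n-2}-n/2$, and no smallness or largeness assumption on $n$ is needed.

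To prove the lemma I would fix $x$ and set $\mathcal F_x=\{S\in\mathcal F:x\in S\}$. A Berge-$S_k$ centered at $x$ is exactly a choice of $k$ distinct sets $S_1,\dots,S_k\in\mathcal F_x$ together with distinct representatives $v_i\in S_i\setminus\{x\}$, i.e.\ a matching of size $k$ in the bipartite incidence graph between $\mathcal F_x$ and $[n]\setminus\{x\}$. Since $\mathcal F$ is $S_k$-free this graph has maximum matching at most $k-1$, so by K\"onig's theorem it has a vertex cover $C=Y\cup\mathcal T$ with $Y\subseteq[n]\setminus\{x\}$, $\mathcal T\subseteq\mathcal F_x$, and $|Y|+|\mathcal T|\le k-1$. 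Every set $S\in\mathcal F_x\setminus\mathcal T$ then has $S\setminus\{x\}\subseteq Y$, hence $|S|\le 1+|Y|$. If such an $S$ had size $\ge k$ we would need $|Y|\ge k-1$, forcing $|Y|=k-1$, $\mathcal T=\emptyset$, and $S=\{x\}\cup Y$ --- a single set; otherwise every size-$\ge k$ member of $\mathcal F_x$ lies in $\mathcal T$. Either way there are at most $|\mathcal T|\le k-1$ sets of size $\ge k$ through $x$. I expect this K\"onig/Hall bookkeeping --- separating the elements $Y$ that ``pin'' almost all sets through $x$ into a ground set of size $\le k$ from the few exceptional sets $\mathcal T$ --- to be the main obstacle; I would also check that the threshold $k$ is exactly right, since a family such as $\{x,a\},\{x,b\},\{x,a,b\}$ (for $k=3$) shows that sets of size only $k-1$ can have degree $k$.

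Finally I would stress one design choice that makes the constant come out sharply rather than off by a factor of two. Counting \emph{all} large sets, or counting complementary pairs $\{A,\bar A\}$, loses a factor of two: an $S_k$-free class can contain up to $2(k-1)$ sets of size about $n/2$ (for instance $k-1$ complementary pairs, or the analogous doubly covered configurations with every element in exactly two sets), and an arbitrary partition may split a pair between two classes, so that a bound on monochromatic pairs per class does not control the number of classes. Restricting instead to the size-$\ge k$ sets containing one \emph{fixed} element $n$ sidesteps both difficulties at once: such sets are never complementary to one another, so there is no double counting, and the per-class bound $k-1$ holds exactly. The only remaining work is then the routine estimate $\sum_{j\le k-2}\binom{n-1}{j}=O(n^{k-2})$, which simultaneously pins down the exact lower-order term $-n/2$ in the case $k=3$.
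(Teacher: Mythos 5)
Your proof is correct and rests on exactly the same key fact as the paper's: an $S_k$-free family contains at most $k-1$ sets of size at least $k$ through any fixed element (the paper gets this by a direct Hall/SDR argument rather than K\"onig, and then sums degrees over all $n$ vertices instead of fixing the single element $n$, but these are cosmetic variations yielding the identical bound). The only nitpick is that in your exceptional branch ($\mathcal T=\emptyset$, one set $\{x\}\cup Y$) the count is $1$ rather than $|\mathcal T|=0$, but the stated bound $k-1$ still holds since $k\ge 2$.
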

\begin{proof} Consider a partition $Q$ of $\P(n)$ into $S_k$-free parts.  Let $H=(V,E)$  be the subhypergraph of $\P(n)$ determined by the edges of size at least $k$. Then $Q$ partitions $H$ into $S_k$-free parts $H_i=(V,E_i)$, for $i=1,\dots,t$.  Since $k$ edges of size at least $k$ cannot have common intersection by the $S_k$-free property, each hypergraph $H_i$ has maximum degree at most $k-1$.  Therefore

$$n2^{n-1} - \left(n + 2\binom{n}{2} + \cdots + (k-1)\binom{n}{k-1}\right) =\sum_{v\in V} d_H(v) = \sum_{i=1}^t \sum_{v\in V} d_{H_i}(v) \le (k-1)nt,$$ implying $t \geq \frac{2^{n-1}}{k-1} - \frac{1}{k-1}\left( 1 + \binom{n-1}{1} + \cdots + \binom{n-1}{k-2} \right) = \frac{2^{n-1}}{k-1} - O\left(n^{k-2}\right)$. For $k=3$, this calculation gives $2^{n-2} - n/2 \leq f(n, S_3)$.
\end{proof}

The discrepancy of $-n/2$ between Proposition \ref{basepart} and \ref{claw} for $f(n,S_3)$ is the consequence of the fact that three edges of $\P^*(n)$ intersecting in a vertex $v$ do not define a claw in the special case when the three edges are $\{v,x,y\},\{v,x\},\{v,y\}$.
Utilizing this with several different designs, we have small examples in Section \ref{designs} showing that the upper bound for $f(n, S_3)$ in Proposition \ref{basepart} can sometimes be lowered (in particular, we show that $f(6,S_3)\le 15$ and $f(9,S_3)\le 126$). It is unclear whether one can use this phenomenon to decrease the upper bound for infinitely many $n$.

For the case of the triangle, the upper bound of Proposition \ref{basepart} is tight. For odd $n\ge 7$ this was shown  with a simple proof in \cite{AGY}. Somewhat surprisingly, this remains true for the even $n$ case as well (but not for $n=5$).
\begin{theorem}\label{trianglethm}
For $n\ge 3, n\ne 5$, $f(n,C_3)=2^{n-2}$. Additionally, $f(5, C_3) = 7$.
\end{theorem}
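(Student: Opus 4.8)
The upper bound $f(n,C_3)\le 2^{n-2}$ is exactly Proposition~\ref{basepart}, so the whole content is the matching lower bound: every partition of $\P(n)$ into $C_3$-free parts uses at least $2^{n-2}$ parts. I would prove this through a single \emph{weighting} of the parts. First rewrite the Berge-triangle condition in its ``pair'' form: three distinct sets $A,B,C$ form a triangle exactly when there are distinct $x,y,z$ with $\{x,y\}\subseteq A$, $\{x,z\}\subseteq B$, $\{y,z\}\subseteq C$; equivalently, the three pairwise intersections $A\cap B$, $A\cap C$, $B\cap C$ admit a system of distinct representatives (SDR). By Hall's theorem a pairwise-intersecting triple is triangle-\emph{free} precisely when it has no SDR, which happens only when (i) two of the three intersections coincide in a single common vertex, or (ii) the union of all three intersections has at most two vertices. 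For a $C_3$-free part $P$ let $a(P)$ count its members of size $>n/2$ and $b(P)$ its members of size exactly $n/2$ (so $b\equiv 0$ for odd $n$), and assign $P$ the weight $2a(P)+b(P)$. Summing over a partition into $t$ parts gives
\[ \sum_{P}\bigl(2a(P)+b(P)\bigr)=2\cdot\#\{X:|X|>n/2\}+\#\{X:|X|=n/2\}=2^{n}, \]
since, pairing each $X$ with $[n]\setminus X$, every complementary pair contributes exactly $2$. Hence it suffices to prove the per-part bound $2a(P)+b(P)\le 4$, which immediately yields $t\ge 2^{n-2}$; note this handles both parities uniformly, reducing for odd $n$ to the assertion $a(P)\le 2$ over the $2^{n-1}$ sets of size $>n/2$.

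The heart of the matter is therefore the inequality $2a(P)+b(P)\le 4$ for a triangle-free $P$, which I would establish by cardinality bookkeeping against configurations (i) and (ii). Two sets of size $>n/2$ always meet, and for even $n$ they meet in at least two vertices (their sizes sum to at least $n+2$), so they cannot supply a singleton intersection in (i); and a common $2$-element core in (ii) forces the three ``petals'' $A\setminus\{u,v\}$, $B\setminus\{u,v\}$, $C\setminus\{u,v\}$ to be pairwise disjoint, giving the packing inequality $3(\lfloor n/2\rfloor-1)\le n-2$, which fails for even $n\ge 4$ and odd $n\ge 7$. Thus three sets of size $>n/2$ always form a triangle, so $a(P)\le 2$. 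The same estimates, applied to triples mixing strictly-large and middle sets, show that once $a(P)=2$ no middle set can be added (for $n\ge 6$), that $a(P)=1$ forces $b(P)\le 2$, and that a family of middle sets alone satisfies $b(P)\le 4$ (four being realized by two complementary pairs, every triple of which contains a disjoint pair). Combining the three cases $a(P)\in\{0,1,2\}$ gives $2a(P)+b(P)\le 4$.

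Three values of $n$ fall outside this clean bound. For $n=3$ the lemma applies directly. For $n=4$ the inequality can fail only for a part with $a(P)=2$ and $b(P)=1$, realized by the triangle-free triple $\{1,2,3\},\{1,2,4\},\{1,2\}$ of weight $5$; but among the five sets of size $\ge 3$ at most two parts can have $a(P)=2$, so any partition into three parts has total weight at most $5+5+4=14<16=2^4$, forcing a fourth part. The genuine exception is $n=5$, where $\{1,2,3\},\{1,2,4\},\{1,2,5\}$ is triangle-free, so $a(P)\le 2$ breaks and the value $2^{n-2}=8$ is not attained. Here I would argue separately: exhibit an explicit partition of $\P(5)$ into $7$ triangle-free parts for $f(5,C_3)\le 7$, and for the lower bound run a dedicated count on the $16$ sets of size $\ge 3$, using that a set of size $\ge 4$ has a reduced petal too large to sit in a type-(ii) triple and hence lies in a part with at most two large sets, which breaks the efficient three-per-part packing and forces a seventh part.

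The main obstacle throughout is configuration (ii): three large sets sharing a fixed $2$-element core with pairwise-disjoint remainders. This is precisely the regime that the petal-packing inequality kills for $n\ge 6$ but that survives for small $n$, and it is the common source of both the $n=5$ exception and the delicate middle-layer accounting in the even case. Getting the cardinality estimates to close simultaneously for all triples that mix sizes $>n/2$ and $=n/2$ — so that $2a(P)+b(P)\le 4$ holds uniformly once $n\ge 6$, while $n=4$ and $n=5$ are pinned down by hand — is where essentially all the work lies.
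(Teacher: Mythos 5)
Your global strategy is the paper's: the weight $2a(P)+b(P)$ summing to $2^n$ over the parts, together with the per-part bound $2a(P)+b(P)\le 4$, is exactly the counting the paper performs with Lemma \ref{trianglelemma}, and your Hall's-theorem classification of triangle-free pairwise-intersecting triples into configurations (i) and (ii) matches the paper's two cases. The pieces of your sketch that do go through are the ones where per-triple cardinality estimates suffice: three sets of size $>n/2$ always form a triangle for $n\ne 5$, and two large sets plus one medium set always form a triangle for even $n\ge 6$, so $a(P)\le 2$ and $a(P)=2\Rightarrow b(P)=0$.

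However, there is a genuine gap where you assert that ``the same estimates'' also give $a(P)=1\Rightarrow b(P)\le 2$ and $a(P)=0\Rightarrow b(P)\le 4$. Per-triple cardinality bookkeeping cannot prove these, because configuration (i) is realizable among medium sets for \emph{every} even $n$: take $M_1=\{1,\dots,n/2\}$, $M_2=\{1\}\cup\{n/2+1,\dots,n-1\}$, $M_3=\{1\}\cup\{n/2+1,\dots,n-2,n\}$; then $M_1\cap M_2=M_1\cap M_3=\{1\}$, Hall fails, and the triple is triangle-free. (Likewise one large plus two medium sets can be triangle-free for all even $n$.) So the statements you need are not statements about triples: one must show that whenever such a bad triple sits inside a part, a \emph{fourth} set of that part completes a triangle with two of its members. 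That is precisely the paper's Case 2 argument (either $D\subseteq B\cup C$, which forces a triangle among $B,C,D$ viewed inside a ground set of size $n-2$, or $D$ meets $A$ and $B\cup C$ outside the common point $x$), plus the explicit fourth- and fifth-set arguments for $n\in\{6,8\}$, where configuration (ii) among medium sets also survives your packing inequality. This four-set interaction is the real content of the even case and is absent from your sketch; without it the per-part bound $2a(P)+b(P)\le 4$ is unproved. Two smaller points: for odd $n$ you rule out configuration (i) among large sets only via the even-$n$ intersection bound, whereas one needs that $A\cap B=A\cap C=\{x\}$ with all sizes at least $(n+1)/2$ forces $B=C$; and your $n=5$ lower bound is only a plan (though the paper, too, merely asserts the uniqueness of the seven-part partition).
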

In case of $G=C_4$ we improve the upper bound of Proposition \ref{basepart} by a constant factor and slightly improve the lower bound ${2^{n-1}\over 3}(1-o(1))$ from \cite{AGY}.

\begin{theorem}\label{c4theorem} For even $n$, we have $f(n, C_4) = \frac{2^{n-1}}{3}\left(1 + \Theta\left(\frac{1}{\sqrt{n}}\right)\right)$. Additionally, for all $n \geq 27$, we have $\frac{2^{n-1}}{3} \leq f(n,C_4) \leq {2^{n-1}\over 3} \left(1 + O\left(\frac{1}{\sqrt{n}}\right)\right)$.
\end{theorem}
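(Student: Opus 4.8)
The plan is to prove matching lower and upper bounds, each organized around a single combinatorial fact: a $C_4$-free part can contain at most three sets of size at least $n/2$. I would begin with the lower bound. The key lemma to establish is that, for $n \ge 27$, any four subsets of $[n]$ each of size at least $n/2$ together contain a Berge-$C_4$. Granting this, if $Q$ partitions $\mathcal{P}(n)$ into $C_4$-free parts, then each part contains at most three sets of size $\ge n/2$; since there are $\sum_{k\ge n/2}\binom{n}{k}\ge 2^{n-1}$ such sets, we obtain at least $\frac{2^{n-1}}{3}$ parts. For even $n$ the middle layer contributes an extra $\frac12\binom{n}{n/2}$ sets of size exactly $n/2$, improving the count to $2^{n-1}+\frac12\binom{n}{n/2}$ and hence the bound to $\frac{2^{n-1}}{3}\left(1+\Omega(1/\sqrt{n})\right)$, using $\binom{n}{n/2}=\Theta(2^n/\sqrt{n})$. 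This is exactly why even $n$ is the sharp case: for odd $n$ there is no middle layer to boost the count.

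To prove the lemma, I would fix four sets $e_1,\dots,e_4$ of size $\ge n/2$ and look for four distinct representatives $v_1\in e_{\sigma(1)}\cap e_{\sigma(2)}$, and so on, for some cyclic ordering $\sigma$. Two sets of size $\ge n/2$ are disjoint only if they are complementary sets of size exactly $n/2$, and since complementation is injective the non-adjacent pairs in the intersection graph form a matching; thus the intersection graph is $K_4$ with at most two independent edges removed, which in every case contains a $4$-cycle. It then remains to verify the system-of-distinct-representatives (Hall) condition for that cycle, and the only way it can fail is if the four relevant pairwise intersections are confined to a common set of at most three elements. I would rule this out from the size hypothesis via $|e_i|+|e_j|-n\le|e_i\cap e_j|$: three or four sets of size $\ge n/2$ cannot have all their pairwise intersections inside a fixed small set without forcing the sets themselves to be tiny. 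The threshold $n\ge 27$ should emerge from making these counting inequalities strict, and the genuinely delicate part is the case of sets of size \emph{exactly} $n/2$, where complementary (hence disjoint) pairs are present.

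For the upper bound I would construct a partition into $\frac{2^{n-1}}{3}\left(1+O(1/\sqrt{n})\right)$ parts of essentially six sets each. The guiding principle is that a family whose intersection graph contains no $4$-cycle is automatically $C_4$-free, and, more robustly, that a family splitting into two \emph{cross-disjoint} triples (every set of one triple disjoint from every set of the other) is $C_4$-free, since any Berge-$C_4$ lies in a single connected piece of the intersection graph while each triple has only three sets. The plan is therefore to match triples of large sets (size $>n/2$) with triples of small sets contained in the complement of their union, so that each part is a large triple together with a disjoint small triple. This uses exactly three large sets per part, matching the lower bound, and balances numerically because large and small sets are equinumerous.

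The main obstacle is the feasibility of this matching near the middle layer. Sets of size close to $n/2$ have supports of size close to $n/2$, so two of them barely leave room for a disjoint partner, and the sets of size exactly $n/2$ cannot be accommodated this way at all. I would handle the bulk of the sets (those of size bounded away from $n/2$) by an explicit pairing or a Hall-type matching argument, and absorb the $O\!\left(\binom{n}{n/2}\right)=O(2^{n-1}/\sqrt{n})$ exceptional near-middle sets into additional parts of smaller size, which is precisely the source of the $O(1/\sqrt{n})$ overhead. Combining the refined lower bound with this construction yields $f(n,C_4)=\frac{2^{n-1}}{3}\left(1+\Theta(1/\sqrt{n})\right)$ for even $n$, while for general $n\ge 27$ the clean count of sets of size $\ge n/2$ gives the stated $\frac{2^{n-1}}{3}\le f(n,C_4)$.
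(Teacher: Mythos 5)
Your lower bound hinges on the claim that, for $n\ge 27$, \emph{any} four sets of size at least $n/2$ contain a Berge-$C_4$. For even $n$ this is false, and the failure is not the delicate borderline issue you anticipate (complementary pairs) but a robust family of counterexamples that the paper calls $\Psi$-configurations. Take $A=\{1,2,\dots,n/2\}$, $B=\{n/2+1,\dots,n\}$, $C=\{1,n/2+1,\dots,n-1\}$, $D=\{1,n/2+2,\dots,n\}$: all four have size at least $n/2$, yet $(A\cap B)\cup(A\cap C)\cup(A\cap D)\subseteq\{1\}$, so any cyclic ordering forces $A$ to supply two distinct representatives from a single element, and no Berge-$C_4$ exists. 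Your plan to ``rule out'' the Hall failure from the size hypothesis cannot succeed here; the obstruction survives for all even $n$. Consequently a $C_4$-free part can contain \emph{four} sets of size $\ge n/2$, and your count degrades to roughly $\frac{1}{4}\bigl(2^{n-1}+\frac12\binom{n}{n/2}\bigr)<\frac{2^{n-1}}{3}$, losing even the basic bound for even $n$. The paper repairs this by separating the strictly-large sets $\L$ (size $\ge n/2+1$) from the middle layer $\M$ (size exactly $n/2$), proving that every non-$C_4$ quadruple in $\M\cup\L$ is a $\Psi$-configuration whose stem lies in $\M$ and whose non-stem sets sit inside a common $(n/2+1)$-set; this yields the refined part-by-part constraints (at most three $\L$-sets per part; at most $1$, $3$, or $4$ $\M$-sets in parts with two, one, or zero $\L$-sets) and a \emph{weighted} count $f(n,C_4)\ge\frac16\bigl(2|\L|+\frac43|\M|\bigr)$, which is where the $\Theta(1/\sqrt n)$ gain actually comes from. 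Your argument does go through for odd $n$ (there all sets in $\L$ have size $\ge(n+1)/2$, a simple inclusion--exclusion count shows $\Psi$-configurations cannot occur, and $|\L|=2^{n-1}$ gives $f\ge 2^{n-1}/3$), which is essentially the paper's Lemma on odd $n$.

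Your upper bound has the right architecture --- parts consisting of a triple of subsets of some $A$ together with a triple of subsets of $[n]\setminus A$ are $C_4$-free because the intersection graph splits into two components of three vertices each --- and this is exactly the shape of the paper's construction. But the feasibility of ``matching triples of large sets with triples of small sets'' is the entire content of that proof, and a generic Hall-type argument is not obviously available; the paper instead builds the triples explicitly by fixing, for each relevant size $m$, a residue class $A_{m,r_m}=\{A:\sum_{a\in A}a\equiv r_m\ (\mathrm{mod}\ n)\}$ of density $\ge 1/n$ and forming parts $\{A\setminus\{a_{3i}\},A\setminus\{a_{3i+1}\},A\setminus\{a_{3i+2}\}\}\cup\{B\setminus\{b_{3i}\},B\setminus\{b_{3i+1}\},B\setminus\{b_{3i+2}\}\}$ with $B=[n]\setminus A$, the residue condition guaranteeing all these sets are distinct across different $A$. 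You would need to supply a construction of comparable concreteness to close the upper bound.
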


While our lower bound for $f(n, C_4)$ for even $n$ is asymptotically larger than our lower bound for odd $n$, we have no reason to believe that the lower bound for odd $n$ cannot be improved. We suspect that a better bound for odd $n$ would follow from a similar proof as that with even $n$, just with more work involved.

For the upper bound on $f(n,C_4)$ we combine designs to include almost all sets in $\P(n)$.
In fact, we do this to provide an upper bound for $f(n,G)$ \emph{for any connected graph $G$}. The construction is based on {\em asymptotically optimal packings}, $D(n,m,r)$, which is a large subset $S\subseteq {[n]\choose m}$ with the property that every $r$-element subset of $[n]$ is contained in {\em at most one} member of $S$. The existence of such packings was proved in a breakthrough paper of R\"odl \cite{RO}. For our purposes only a special case is needed, $D(n,m,m-1)$, where constructions were known earlier, for example in \cite{KUZ}.
\begin{theorem}\label{asymcycle}
Let $G$ be a connected graph with $k$ edges, where $k \geq 2$ is fixed. Then $f(n, G) \leq \frac{2^n}{2(k-1)} \left(1 + O\left(\frac{1}{\sqrt{n}}\right)\right)$.
\end{theorem}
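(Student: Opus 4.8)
The plan is to build an explicit coloring of almost all of $\mathcal{P}(n)$ in which every color class is a carefully chosen family of exactly $2(k-1)$ sets that is $G$-free, and then to spend one extra color on each of the few leftover sets. Since any Berge-$G$ requires $k$ distinct hyperedges, a family of at most $k-1$ sets is automatically $G$-free, which already yields the weaker bound $2^n/(k-1)$ by cutting a chain decomposition of $\mathcal{P}(n)$ into blocks of $k-1$ nested sets (whose intersection graph is a subgraph of $K_{k-1}$, too small to contain the connected $k$-vertex graph $L(G)$). The entire difficulty, and the source of the extra factor $2$, is to double the class size to $2(k-1)$ while preserving $G$-freeness. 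The mechanism I would use for this is \emph{complementation}: if a class is closed under $A \mapsto [n]\setminus A$, then each vertex of $[n]$ lies in exactly $k-1$ of the $2(k-1)$ members, so the class has maximum degree $k-1$ and any $k$ of its hyperedges must contain a complementary (hence disjoint) pair. This instantly rules out a Berge-star $S_k$ (whose center would need degree $k$) and, more importantly, severely constrains the intersection pattern of any $k$ chosen hyperedges.

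Next I would fix a uniformity $m$ near $n/2$ and invoke the asymptotically optimal packing $D(n,m,m-1)$ of \cite{RO,KUZ}: a family of $m$-sets in which every $(m-1)$-set is covered at most once, so that distinct blocks of the packing meet in at most $m-2$ points (their complements being again such a packing). Working through the layers $\binom{[n]}{m}$, I would pair each layer $m \ne n/2$ with its complementary layer $n-m$ and use these packings to organize almost all of the paired layers into complementation-closed blocks of size $2(k-1)$ whose ``lower'' halves have the controlled intersection structure the packing provides. The sets not placed into any full block --- most conspicuously the entire middle layer $\binom{[n]}{n/2}$ for even $n$, where a set and its complement lie in the same layer and cannot be doubled --- are colored individually. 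Since $\binom{n}{n/2} = \Theta(2^n/\sqrt n)$, this single obstruction already contributes $\Theta(2^n/\sqrt n)$ extra colors, i.e.\ exactly a relative error of order $1/\sqrt n$ against the main term $2^n/(2(k-1))$; the (lower-order) imperfection of the packings would be absorbed into the same $O(1/\sqrt n)$ term.

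The heart of the argument, and the step I expect to be the main obstacle, is the \emph{$G$-freeness of a single block}. I would argue by contradiction: a Berge-$G$ inside a block consists of $k$ of its hyperedges together with $|V(G)| \le k+1$ distinct representative vertices, one sitting in each pairwise intersection prescribed by an edge of $G$. Because $G$ is connected, $L(G)$ is connected, so the $k$ chosen hyperedges must have a connected intersection pattern bridging the two complementary halves of the block; combined with the degree-$(k-1)$ bound and the small-intersection structure imposed by $D(n,m,m-1)$, the goal is to show that the prescribed representatives cannot all be made distinct. Making this airtight \emph{uniformly over all connected $G$ with $k$ edges} is delicate precisely for the low-degree cases $C_k$ and $P_k$: there the representatives of leaf vertices need not lie in any pairwise intersection, and naive schemes fail --- for instance, three nested sets together with one complement can already realize a Berge-$C_4$ once the smallest set has two elements, which is exactly why the block's lower half must be shaped by the packing's intersection property rather than by an arbitrary split into $(k-1)$-blocks. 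I expect this is also the reason the paper treats $C_4$ separately before proving the general statement.

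Finally I would assemble the bound by bookkeeping: the number of full blocks is $\tfrac{1}{2(k-1)}$ times the number of sets they cover, the individually colored leftovers number $O(2^n/\sqrt n)$, and summing these gives a total of $\tfrac{2^n}{2(k-1)}\bigl(1 + O(1/\sqrt n)\bigr)$ colors, as claimed. Verifying that the leftover genuinely has size $O(2^n/\sqrt n)$ --- rather than a larger polynomial-in-$1/\sqrt n$ loss accumulated across the $\Theta(\sqrt n)$ central layers --- is the quantitative point that must be checked once the block-$G$-freeness lemma is in hand.
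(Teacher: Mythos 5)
Your high-level skeleton is right (blocks of size $2(k-1)$ covering all but $O(2^n/\sqrt n)$ sets, leftovers colored in groups of at most $k-1$, the middle layers as the source of the $1/\sqrt n$ error), but the proof has a genuine gap at exactly the point you flag as ``the heart of the argument'': you never establish that a single block is $G$-free, and the design principle you propose for the blocks --- closure under complementation --- is not sufficient to deliver it. Closure under complementation only guarantees that among any $k$ hyperedges of a block there is at least one disjoint pair; that kills $S_k$ but does not kill $C_k$ or $P_k$, since a Berge-$C_k$ or Berge-$P_k$ only needs \emph{consecutive} hyperedges to intersect, and the forced disjoint pair can simply be placed at non-consecutive positions (your own $C_4$ example shows this). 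You then defer the real work to unspecified ``small-intersection structure imposed by $D(n,m,m-1)$,'' but distinct blocks of such a packing can still intersect in up to $m-2 \approx n/2$ points, so there is no usable smallness there either. As written, the central lemma is missing and the stated mechanism cannot prove it uniformly over connected $G$.

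The paper avoids this entirely by choosing a different block structure, and the contrast is instructive. Each block is built from a complementary pair $A$, $B=[n]\setminus A$ with $|A|=m<n/2$: it consists of $k-1$ sets of the form $A\setminus\{a_j\}$ and $k-1$ sets of the form $B\setminus\{b_j\}$. These blocks are \emph{not} complementation-closed; instead, every ``small'' member is a subset of $A$ and every ``large'' member is a subset of $B$, so the two halves are \emph{mutually disjoint} and the intersection graph of the block splits into two components of at most $k-1$ vertices each. Since $G$ is connected, the $k$ hyperedges of any Berge-$G$ pairwise chain together into a connected intersection graph, which cannot fit in a component of size $k-1$ --- that one sentence is the entire $G$-freeness argument, with no case analysis over $G$. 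The packing (realized explicitly as $A_{m,r}=\{A\in\binom{[n]}{m}: \sum_{a\in A}a\equiv r \Mod n\}$ for a popular residue $r=r_m$) is used only for bookkeeping: it guarantees that the sets $A\setminus\{a_j\}$ produced from different $A$ in the same layer are all distinct, and that each layer contributes at least $\frac1n\binom nm$ generating sets, which is what makes the leftover $O(2^n/\sqrt n)$. If you redesign your blocks so that the two halves are mutually disjoint families rather than complementary pairs, your counting paragraph goes through essentially as you wrote it.
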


The upper bound of Theorem \ref{asymcycle} gives the upper bound in Theorem \ref{c4theorem}. In fact, it also matches the corresponding asymptotic lower bound ${2^{n-1}\over |E(G)|-1}(1-o(1))$ in \cite{AGY} when $G$ is a cycle or path, and the asymptotic lower bound of Proposition \ref{claw}, implying
\begin{corollary} $\frac{2^n}{2(k-1)} (1 - o(1))\leq f(n, C_k), f(n, P_k), f(n, S_k) \leq \frac{2^n}{2(k-1)} (1 + o(1))$.
\end{corollary}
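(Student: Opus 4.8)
The plan is to assemble the three ingredients already available, since the corollary bounds all three quantities by the same expression $\frac{2^n}{2(k-1)}$ up to a $(1 \pm o(1))$ factor. First I would establish the common upper bound. Each of $C_k$, $P_k$, and $S_k$ is a connected graph with exactly $k$ edges, so Theorem \ref{asymcycle} applies verbatim to each, yielding
$$f(n, G) \leq \frac{2^n}{2(k-1)}\left(1 + O\left(\frac{1}{\sqrt{n}}\right)\right) = \frac{2^n}{2(k-1)}(1 + o(1))$$
for every $G \in \{C_k, P_k, S_k\}$. This one display simultaneously handles the right-hand inequality in all three cases.

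For the matching lower bounds I would treat the star separately from the cycle and the path. For the star, Proposition \ref{claw} gives $f(n, S_k) \geq \frac{2^{n-1}}{k-1} - O(n^{k-2})$; since $k$ is fixed, the error term is polynomial in $n$ and hence $o(2^n)$, so rewriting $\frac{2^{n-1}}{k-1} = \frac{2^n}{2(k-1)}$ produces $f(n, S_k) \geq \frac{2^n}{2(k-1)}(1 - o(1))$. For the cycle and the path, I would invoke the lower bound $\frac{2^{n-1}}{|E(G)|-1}(1 - o(1))$ from \cite{AGY}; because $|E(C_k)| = |E(P_k)| = k$, this reads $\frac{2^{n-1}}{k-1}(1 - o(1)) = \frac{2^n}{2(k-1)}(1 - o(1))$, exactly the desired left-hand inequality.

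Pairing the upper-bound display with the appropriate lower bound for each of the three graphs then completes the proof. The only point requiring any care — routine rather than a genuine obstacle — is verifying that the polynomial error term in Proposition \ref{claw} is absorbed into the $(1 - o(1))$ factor; this holds precisely because $k$ is held fixed as $n \to \infty$, and the absorption would fail if $k$ were permitted to grow with $n$.
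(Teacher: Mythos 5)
Your proposal is correct and matches the paper's argument exactly: the upper bound comes from Theorem \ref{asymcycle} applied to each of $C_k, P_k, S_k$, the lower bound for the star from Proposition \ref{claw}, and the lower bound for cycles and paths from the $\frac{2^{n-1}}{|E(G)|-1}(1-o(1))$ bound of \cite{AGY}. Your added remark about the error term being absorbed only because $k$ is fixed is a fair observation that the paper leaves implicit.
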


\section{Proof of Theorem \ref{trianglethm}}\label{tr}

It was shown in \cite{AGY} that $f(n,C_3)=2^{n-2}$ for any odd $n\ge 3, n\ne 5$.  The following $C_3$-free partition of $\P^*(5)$ shows that $n=5$ is indeed exceptional. (Here and later we represent sets of small numbers without commas and brackets.)
$$X_1=\{[5],[4]\}, Y_1=\{124,234,245\}, Y_2=\{123,134,135\}, Z_1=\{12,35,1235,345\},$$
\begin{equation}\label{f5}
 Z_2=\{23,45,2345,145\}, Z_3=\{34,15,1345,125\}, Z_4=\{14,25,1245,235\}.
\end{equation}

In fact, (\ref{f5}) is the only partition of $\P^*(5)$ into at most seven $C_3$-free parts (up to permutations), implying
 $f(5,C_3)=7$. For $n \neq 5$, three sets of size at least $\lfloor n/2 \rfloor +1$ always form a triangle (this is proven for odd $n$ in \cite{AGY} and generalized for even $n$ in Lemma \ref{trianglelemma}). This is indeed not true for $n=5$, as witnessed by the `crowns' $Y_1$ and $Y_2$ in (\ref{f5}).

Let $\L$ be the set of all subsets of $[n]$ of size at least $\lfloor n/2\rfloor + 1$ (these are the `large' subsets). For even $n$ let $\M$ be the set of all subsets of $[n]$ of size $n/2$ (these are the `medium' subsets). Note that $2|\L| + |\M| = 2^n$.

\begin{lemma}\label{trianglelemma}
For every even $n \geq 6$, we have the following:

\begin{enumerate}

\item For any distinct $M_1,M_2,M_3,M_4,M_5 \in \M$, some three form a triangle.

\item For any distinct $M_1,M_2,M_3 \in \M, L \in \L$, some three form a triangle.

\item Any distinct $M \in \M, L_1,L_2 \in \L$ form a triangle.

\item Any distinct $L_1, L_2, L_3 \in \L$ form a triangle.

\end{enumerate}
\end{lemma}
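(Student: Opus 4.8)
The plan is to reduce the property ``$A,B,C$ form a triangle (a Berge-$C_3$)'' to the existence of a system of distinct representatives (SDR) for the three pairwise intersections $A\cap B$, $A\cap C$, $B\cap C$, and then control this by the sizes of the sets. By Hall's theorem, no SDR exists exactly when one of the following holds: (i) some pairwise intersection is empty; (ii) two of the three intersections are equal and consist of a single common element $\{x\}$; or (iii) the union $(A\cap B)\cup(A\cap C)\cup(B\cap C)$ has at most two elements. The basic size inputs, immediate from inclusion--exclusion, are that two large sets meet in at least $|L_1|+|L_2|-n\ge 2$ elements and a medium set meets a large set in at least one element, whereas medium--medium intersections can vanish (precisely for complementary pairs). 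The single inequality that drives everything is this: writing $u=|(A\cap B)\cup(A\cap C)\cup(B\cap C)|$ for the number of elements lying in at least two of $A,B,C$, one checks $2u\ge u+|A\cap B\cap C|\ge |A|+|B|+|C|-n$. Hence whenever $|A|+|B|+|C|>n+4$, failure (iii) is impossible.

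Parts (4) and (3) then fall out directly. For (4), all three pairwise intersections have size at least $2$, so (i) and (ii) cannot occur, and since $|L_1|+|L_2|+|L_3|\ge\tfrac{3n}{2}+3>n+4$ for $n\ge 6$, failure (iii) is excluded by the inequality. For (3), $M\cap L_1$ and $M\cap L_2$ are nonempty while $L_1\cap L_2$ has size at least $2$, so (i) is out; failure (ii) would need $M\cap L_1=M\cap L_2=\{x\}$, but $|M\cap L_i|=1$ forces $L_i=\overline{M}\cup\{x\}$ and hence $L_1=L_2$, a contradiction; and (iii) is excluded since $|M|+|L_1|+|L_2|\ge\tfrac{3n}{2}+2>n+4$ for $n\ge 6$.

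For (2) I would first dispose of $|L|\ge n/2+2$: then every medium--large intersection has size at least $2$, and since complementary pairs among $M_1,M_2,M_3$ form a matching (so at most one pair is complementary), any non-complementary pair $M_i,M_j$ already gives the triangle $\{M_i,M_j,L\}$. So assume $|L|=n/2+1$ and suppose, for contradiction, that none of the triples $\{M_i,M_j,L\}$ is a triangle. Call a medium set \emph{type-$1$} if it equals $\overline{L}\cup\{x\}$ for some $x\in L$ (equivalently, meets $L$ in a single point) and \emph{type-$2$} if it is contained in $L$. For $n\ge 8$ the inequality kills (iii) for each such triple (the sizes sum to at least $\tfrac{3n}{2}+1>n+4$), while (ii) with two equal medium--large singletons would give $M_i=M_j$; hence each non-complementary failing pair consists of one type-$1$ and one type-$2$ set. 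If no pair is complementary, the three edges of the triangle on $\{M_1,M_2,M_3\}$ would all be bichromatic in this $2$-colouring, impossible for $K_3$. If exactly one pair, say $\{M_2,M_3\}$, is complementary, then $M_2,M_3$ receive the type opposite to $M_1$, so either both lie in $L$ (impossible, as complementary sets cover $[n]\not\subseteq L$) or both contain $\overline{L}$ (impossible, as then $M_2\cap M_3\supseteq\overline{L}\ne\emptyset$). Each case is contradictory, so some triple is a triangle; the leftover case $n=6$ is checked by hand.

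For (1) I would work with the \emph{heavy graph} on $M_1,\dots,M_5$, joining two sets meeting in at least two points. Any three pairwise-heavy medium sets form a triangle once $n\ge 10$: (i),(ii) need a small intersection, and (iii) is excluded since the sizes sum to $\tfrac{3n}{2}>n+4$. So it suffices to find a heavy triangle. The complementary \emph{light graph} (intersections of size at most $1$) is triangle-free for $n\ge 8$, since three pairwise-light medium sets would have union at least $\tfrac{3n}{2}-3>n$. Thus if the heavy graph were also triangle-free we would have a $2$-colouring of $K_5$ with no monochromatic triangle, which forces both graphs to be the pentagon $C_5$. In that rigid configuration I would locate a length-two pentagon path whose two edges have size exactly $1$ (at most two light edges are complementary, and a $2$-matching in $C_5$ always leaves a vertex incident to two size-$1$ edges); if its centre is non-degenerate the path is already a triangle, and otherwise the cyclic structure forces a single element common to every pairwise intersection, a case dispatched by an explicit triple, with $n\in\{6,8\}$ verified directly. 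I expect part (1) to be the main obstacle: eliminating a heavy triangle leaves exactly this self-complementary pentagon, and inside it one must still separate the genuinely degenerate arrangement (a common point in all pairwise intersections) from the rest, together with the finite checks at $n=6$ and $n=8$; by contrast parts (2)--(4) reduce cleanly to the size bounds and the single counting inequality.
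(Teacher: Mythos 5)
Your engine --- Hall's theorem on the three pairwise intersections plus the counting inequality $2\,|(A\cap B)\cup(A\cap C)\cup(B\cap C)|\ge |A|+|B|+|C|-n$ --- is essentially the paper's, and parts (3) and (4) are correct and complete. Part (2) is also sound for $n\ge 8$ (your type-$1$/type-$2$ two-colouring of $K_3$ is a clean reorganization of what the paper does in its Case 2), but be aware that the $n=6$ case you defer to a hand check is not a triviality: at $n=6$ the union bound degenerates to $u\ge 2$, and this is exactly the paper's Case 1b with $\delta=1$, which requires a genuine argument invoking the remaining medium sets.

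The genuine gap is in part (1), in two places. First, the degenerate branch of the pentagon endgame is wrong as stated: from $M_1\cap M_2=M_1\cap M_5=\{x\}$ it does \emph{not} follow that some element lies in every pairwise intersection --- indeed if any light edge is complementary, some pairwise intersection is empty and no such element can exist, and even with no complementary pairs the hypothesis says nothing about, say, $M_3\cap M_4$. The configuration you chose (two size-$1$ light edges at a common vertex) is the one that genuinely can fail to be a Berge triangle, so it cannot carry the argument. The pentagon idea is salvageable by a different triple: take any non-complementary light edge, say $M_1\cap M_2=\{x\}$ (at least three of the five light edges are like this), and the opposite vertex $M_4$, which is heavy-adjacent to both; assigning $x$ to $M_1\cap M_2$, distinct representatives in $M_1\cap M_4\setminus\{x\}$ and $M_2\cap M_4\setminus\{x\}$ can only fail if both equal the same singleton $\{y\}$, forcing $y\in M_1\cap M_2=\{x\}$, a contradiction. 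Second, your thresholds (heavy triangles work only for $n\ge 10$, light-triangle-freeness only for $n\ge 8$) mean that $n=6$ and $n=8$ are not ``verified directly'' afterthoughts: for $n=8$ the heavy triangle $1234,1256,1278$ is \emph{not} a Berge triangle and a fourth set must be brought in, and for $n=6$ a fourth and even a fifth set are needed --- this is precisely where the paper's Cases 1a and 1b do their work, and your write-up contains no argument for them.
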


\begin{proof}[Proof of Theorem \ref{trianglethm} from Lemma \ref{trianglelemma}]
For odd $n$ the theorem  was proved in \cite{AGY}.  By Proposition \ref{basepart}, we have to prove that $f(n,C_3)\ge 2^{n-2}$ for even $n$. Let $n\geq 6$, and let $Q$ be a partition of $\P(n)$ into the minimum number of triangle-free parts. (For $n=4$ a similar lemma and argument works.) Let there be $a$ parts of $Q$ with exactly two sets of $\L$, let there be $b$ parts of $Q$ with exactly one set of $\L$, and let there be $c$ parts of $Q$ with no sets of $\L$. Lemma \ref{trianglelemma} implies that these account for all the parts, so that $a + b + c = f(n, C_3)$. Lemma \ref{trianglelemma} also implies that $|\M| \leq 2b + 4c$, and since $|\L| = 2a + b$, we have
\[ f(n, C_3) = a + b + c \geq \frac{1}{4}(2|\L| + |\M|) = \frac{1}{4}(2^n)=2^{n-2} .\]
\end{proof}

\begin{proof}[Proof of Lemma \ref{trianglelemma}]

Since a set of $\L$ always contains as a subset a set of $\M$, it is clear that statement 3 implies statement 4. Thus we only need to prove statements 1, 2, and 3.

Let's first note some basic intersection properties of sets from $\M \cup \L$. Let $L_1, L_2 \in \L$ and $M_1, M_2, M_3 \in \M$ be arbitrary. It is clear that $|L_1 \cap L_2| \geq 2$, $|L_1 \cap M_1| \geq 1$, and either $|M_1 \cap M_2| \geq 1$ or $|M_2 \cap M_3| \geq 1$.

In any of the three cases of the lemma, we first want to find three pairwise intersecting sets. In the first case, WLOG $M_1$ intersects with $M_2$, $M_3$, and $M_4$, and again WLOG $M_2$ intersects with $M_3$. In the second case, $L$ intersects $M_1$, $M_2$, and $M_3$, and WLOG $M_1$ intersects $M_2$. In the third case, every pair of sets intersect.

Let $A, B, C \in \M \cup \L$ be three distinct pairwise intersecting sets, in any case, and suppose they do not form a triangle. By Hall's theorem as applied to distinct representatives, there are only a few cases where they may not form a triangle. WLOG, either $|(A \cap B) \cup  (A \cap C)| \leq 1$ or $|(A \cap B) \cup (A \cap C) \cup (B \cap C)| \leq 2$. In the first case, it cannot be that $|A \cap B \cap C| = 0$, since the sets are pairwise intersecting, so we must have $|A \cap B \cap C| = 1$ and $|A \cap B \setminus C| = |A \cap C \setminus B| = 0$. In the second case, it likewise cannot be that $|A \cap B \cap C| = 0$. The case where $|A \cap B \cap C| = 1$ falls into the previous case, so this case reduces to $|A \cap B \cap C| = 2$ and $|A \cap B \setminus C| = |B \cap C \setminus A| = |C \cap A \setminus B| = 0$.

Define $\delta_A = |A|-n/2$, and likewise for $B$ and $C$. Furthermore let $\delta = \delta_A + \delta_B + \delta_C$.

\textbf{Case 1:} $|A \cap B \cap C| = 2$ and $|A \cap B \setminus C| = |B \cap C \setminus A| = |C \cap A \setminus B| = 0$. Here we count
\[ n \geq |A \cup B \cup C| = |A| + |B| + |C| - |A \cap B| - |A \cap C| - |B \cap C| + |A \cap B \cap C| \geq \frac{3}{2}n + \delta - 2 - 2 - 2 + 2 \]
implying
\[ n + 2\delta \leq 8 .\]

\textbf{Case 1a:} Suppose $n=8$. Then $\delta = 0$ and so $A, B, C \in \M$ and WLOG the configuration is isomorphic to $A = 1234$, $B =1256$, and $C = 1278$. A fourth set $D \in \M\cup \L$ must meet two of $A,B,C$ in a vertex not in $\{1,2\}$, forming a triangle with them.

\textbf{Case 1b:} Suppose $n=6$. If $\delta=0$, then WLOG $A = 123$, $B = 124$, and $C = 125$. The only pairs of vertices a fourth set $D \in \M \cup \L$ may contain without forming a triangle are those pairs containing $6$ and the pair $12$. Thus unless $D = 126$, we have a triangle. If $D = 126$, then we must be in the first case of the lemma, and so we may take a fifth set $E \in \M$. Since $\{1,2\} \not\subseteq E$, we have that $E$ must contain two vertices of $\{3,4,5,6\}$, forming a triangle.

If $\delta = 1$, then WLOG we are in the second case of the lemma and $A = 123$, $B = 124$, and $C = 1256$. Since the fourth set $D \in \M$ contains some pair of vertices other than $12$ and $56$, we have a triangle.

\textbf{Case 2:} $|A \cap B \cap C| = 1$ and $|A \cap B \setminus C| = |A \cap C \setminus B| = 0$. Here we count
\[ n \geq |A \cup B \cup C| = |A| + |B| + |C| - |A \cap B| - |A \cap C| - |B \cap C| + |A \cap B \cap C| \geq \frac{3}{2}n + \delta - |B \cap C| - 1 ,\]
therefore
\[ \frac{1}{2} n + \delta - 1 \leq |B \cap C| \leq n - |A| \quad \text{(since $B$ and $C$ are distinct)} ,\]
implying
\[  2\delta_A + \delta_B + \delta_C \leq 1 .\]
Thus $\delta_A = 0$, and at most one of $\delta_B$ and $\delta_C$ is $1$, meaning that WLOG $A, B \in \M$, so we are not in the third case of the lemma. This means that the third case of the lemma was proved in case 1, so we are free to use it to finish the proof here. Let $D \in \M$ be a set distinct from $A$, $B$, and $C$.

If $D \subseteq B \cup C$, then $B$, $C$, and $D$ are three sets of size at least $(n-2)/2 + 1$ contained within a set of size $n/2+1 \leq n-2$. We may apply the third case of the lemma (with $n-2$ for $n$) to see that $B, C, D$ form a triangle.

Otherwise, let $x \in A \cap B \cap C$, let $y \in D \cap A \setminus \{x\}$, and let $z \in D \cap (B \cup C) \setminus \{x\}$. Note that $x,y,z$ necessarily exist and are distinct, so either $A, D, B$ or $A, D, C$ form a triangle.

\end{proof}

\section{Proof of Theorem \ref{c4theorem}}\label{c4}

The upper bound of Theorem \ref{c4theorem} follows from Theorem \ref{asymcycle} (with $k=4$). So we prove the lower bound. As in Section \ref{tr}, we need a lemma concerning sets of $\M \cup \L$. We also give the corresponding lemma for odd $n$, which we prove from Lemma \ref{evenc4lemma}.

\begin{lemma}\label{evenc4lemma} For every even $n \ge 26$, we have the following:
	\begin{enumerate}
		\item For any distinct $M_1, M_2, M_3, M_4, M_5 \in \M$, some four form a $C_4$.
		\item For any distinct $M_1, M_2, M_3, M_4 \in \M$, $L_1 \in \L$, some four form a $C_4$.
		\item Any distinct $M_1, M_2 \in \M$, $L_1, L_2 \in \L$ form a $C_4$.
		\item Any distinct $M_1 \in \M$, $L_1, L_2, L_3 \in \L$ form a $C_4$.
	\end{enumerate}
\end{lemma}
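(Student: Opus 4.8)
The plan is to mirror the structure of the proof of Lemma \ref{trianglelemma}, replacing the triangle (the SDR on three pairwise intersections) by the analogous system for a $4$-cycle. The key reformulation is this: four distinct sets $W,X,Y,Z \in \M \cup \L$, taken in the cyclic order $W,X,Y,Z$, form a Berge-$C_4$ if and only if the four \emph{consecutive} intersections $W \cap X$, $X \cap Y$, $Y \cap Z$, $Z \cap W$ admit a system of distinct representatives (SDR). By Hall's theorem such an SDR fails exactly when some subfamily of these four intersection sets has a union smaller than its cardinality, and since each set has size at least $n/2$ these violations are highly constrained. The crucial extra freedom, compared to the triangle case, is that for four given sets we may try any of the three cyclic orderings (they correspond to deleting one of the three perfect matchings of $K_4$), and in statements 1 and 2 we may additionally choose which four of the five sets to use. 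Exploiting this freedom is the heart of the argument.

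First I would reduce statement 4 to statement 3, just as statement 3 implied statement 4 in Lemma \ref{trianglelemma}: since $|L_1| \geq n/2 + 1$, the set $L_1$ contains a medium set $M \neq M_1$, and enlarging a hyperedge never destroys a Berge-$C_4$, so a $C_4$ among $M_1, M, L_2, L_3$ (two medium, two large) produced by statement 3 lifts to one among $M_1, L_1, L_2, L_3$. Next, for each of statements 1, 2, 3 I would locate a cyclic ordering whose consecutive intersections are all nonempty, i.e.\ a $4$-cycle in the intersection graph. This is immediate because $|M \cap L| \geq 1$ and $|L_1 \cap L_2| \geq 2$ always hold, while two medium sets fail to intersect only when they are complementary; among any family of medium sets each set has a unique complement, so the non-intersection graph is a matching, and on four or five medium sets the intersection graph is $K_4$ or $K_5$ minus a matching and therefore always contains a $4$-cycle. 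For statement 3 the ordering $M_1, L_1, M_2, L_2$ already works, since each consecutive pair contains one medium and one large set.

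The main work, and the main obstacle, is the case analysis when the SDR fails for the chosen ordering. Here I would use the defect form of Hall's theorem: a failing subfamily of size $k \in \{2,3,4\}$ forces the corresponding consecutive intersections to pile up inside a set of fewer than $k$ elements. An inclusion–exclusion bound on $|W \cup X \cup Y \cup Z| \leq n$ against $|W|, |X|, |Y|, |Z| \geq n/2$, together with the fact that at most two complementary pairs occur among five medium sets, eliminates most such configurations once $n$ is large. The subtle point is that a single cyclic ordering controls only the four consecutive intersections and leaves the two \emph{diagonal} intersections free, so the count on one ordering alone does not close — for example the size-$4$ defect only says that two disjoint ``diagonal groups'' nearly fill $[n]$. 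The resolution is that a defect in one ordering \emph{forces the diagonal intersections to be large}, so switching to a second ordering, which promotes a large diagonal to a cycle-edge, produces the SDR; the residual configurations in which all three orderings fail simultaneously pin the four sets into a common set of bounded size, which is impossible for $n \geq 26$. I expect this interplay among the three orderings — augmented, for statements 1 and 2, by the choice of which four of the five sets to take and by the extra room that $L_1$ provides — to be precisely where the threshold $n \geq 26$ is determined and where the bulk of the casework lies.
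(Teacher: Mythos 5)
There is a genuine gap, and it is located exactly where you wave your hands at the end: the claim that ``the residual configurations in which all three orderings fail simultaneously pin the four sets into a common set of bounded size, which is impossible for $n \geq 26$'' is false. Consider $A = \{1,\dots,n/2\} \in \M$ and let $B,C,D$ be any three distinct $\frac{n}{2}$-subsets of the $(\frac{n}{2}+1)$-set $([n]\setminus A)\cup\{1\}$. Then $A\cap B$, $A\cap C$, $A\cap D$ are all subsets of $\{1\}$, so in every cyclic ordering the two consecutive intersections involving $A$ lie in a single point and no SDR exists: these four sets of $\M\cup\L$ never form a Berge-$C_4$, for every even $n$. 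The paper isolates precisely this obstruction (calling it a $\Psi$-configuration) and proves the dichotomy that any four distinct sets of $\M\cup\L$ form either a $C_4$ or a $\Psi$-configuration; your switching-between-orderings argument is essentially the engine of that dichotomy (the paper also minimizes the diagonal union over the three matchings), but without naming the exceptional family your casework cannot close.

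This omission is not cosmetic, because the whole reason statements 1 and 2 involve five sets, and statement 3 involves two large sets, is to kill the $\Psi$-configurations, and you give no mechanism for doing so. The paper's finish requires structural facts about $\Psi$-configurations that your proposal never derives: the stem is a unique $\frac{n}{2}$-set, the non-stem sets live inside a common $(\frac{n}{2}+1)$-set (hence at most one of them can be in $\L$, which gives statement 3), and the non-stem sets pairwise intersect in at least $\frac{n}{2}-1$ elements (which forces the stem of a second $\Psi$-configuration on an overlapping quadruple to be the same set, so that in statement 1 the four non-stem sets $M_2,M_3,M_4,M_5$ pairwise intersect hugely and form a $C_4$). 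Your ``choice of which four of the five sets to take'' gestures at this but supplies no argument. Separately, note that for statement 3 exhibiting the ordering $M_1,L_1,M_2,L_2$ with nonempty consecutive intersections establishes only a $C_4$ in the intersection graph, not a Berge-$C_4$; as the paper points out for triangles, this implication does not reverse. Your reduction of statement 4 to statement 3 by shrinking $L_1$ to a medium subset distinct from $M_1$ is fine and matches the paper.
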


\begin{lemma}\label{oddc4lemma} For every odd $n \geq 27$, any distinct $L_1, L_2, L_3, L_4 \in \L$ form a $C_4$.
\end{lemma}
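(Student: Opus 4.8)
The plan is to deduce Lemma~\ref{oddc4lemma} from the even case, Lemma~\ref{evenc4lemma}, by an embedding trick: given four large subsets $L_1,L_2,L_3,L_4$ of an odd-sized ground set $[n]$, I want to realize them (or their relevant intersection structure) inside an even ground set where Lemma~\ref{evenc4lemma} applies. Since the excerpt explicitly says the odd lemma is proved \emph{from} Lemma~\ref{evenc4lemma}, the natural move is to pass from $[n]$ with $n$ odd to $[n+1]$ with $n+1$ even (or to $[n-1]$). The key observation to exploit is that a Berge-$C_4$ is witnessed purely by the intersection pattern of the four edges via distinct representatives, so if I can embed $L_1,\dots,L_4$ as large-or-medium sets in an even ground set without destroying the pairwise intersections, any $C_4$ found there pulls back to a $C_4$ among the original sets.

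First I would set $m = n+1$ (even) and view $[n]\subseteq [m]$. Each $L_i$ has size at least $\lfloor n/2\rfloor + 1 = (n+1)/2 = m/2$, so as a subset of $[m]$ each $L_i$ is either medium (size exactly $m/2$) or large (size $> m/2$); crucially it lands in $\M\cup\L$ for the ground set $[m]$. Then the four sets $L_1,\dots,L_4$, now viewed in $[m]$, fall into one of the configuration types covered by Lemma~\ref{evenc4lemma}: they are four sets in $\M\cup\L$ over an even ground set. The relevant cases of Lemma~\ref{evenc4lemma} are items~2,~3, and~4 (four sets with at least one large, since $L_i$ large in $[n]$ forces size $\ge m/2$ in $[m]$, and we never have all five medium sets here because we only have four sets). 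I would check that whatever the split into medium/large in $[m]$, some sub-statement of Lemma~\ref{evenc4lemma} guarantees these four sets form a $C_4$ in $[m]$. Finally, because the added element $m = n+1$ belongs to none of the $L_i$, every intersection $L_i\cap L_j$ is unchanged by the embedding, so a system of distinct representatives for the six pairwise intersections in $[m]$ is automatically a valid one in $[n]$; hence the $C_4$ descends.

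The main obstacle I expect is the bookkeeping in the medium/large classification over $[m]$ and confirming the numerical hypothesis $n+1 \ge 26$, i.e. $n \ge 25$, is compatible with the stated threshold $n \ge 27$ (indeed $n\ge 27$ gives $m = n+1 \ge 28 \ge 26$, comfortably within range). A subtler point is whether all four sets could become \emph{medium} in $[m]$ simultaneously — this happens exactly when every $L_i$ has size precisely $(n+1)/2$ — in which case I have four medium sets and no large set, a configuration \emph{not} directly listed (Lemma~\ref{evenc4lemma} lists four medium \emph{plus} a fifth, or fewer mediums with larges). To handle this I would instead embed into $[m]$ with $m=n-1$ where appropriate, or add the new element to one chosen $L_i$ to promote it to large while leaving the others and all cross-intersections intact; since a $C_4$ needs only four of the sets and the promoted set still contains its old elements, the intersection pattern is preserved and item~2 or~4 of Lemma~\ref{evenc4lemma} then applies. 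Verifying that one such embedding always lands in a covered case, and that the distinct-representative system pulls back cleanly, is the crux of the argument.
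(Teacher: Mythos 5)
Your overall strategy---reduce the odd case to the even case by changing the ground set by one element---is the right one and is what the paper does, but the paper goes \emph{down} to $[n]\setminus\{j\}$ rather than up to $[n+1]$, and your upward embedding has a genuine gap at exactly the point you flag. Over $[n+1]$ the four sets split as $p$ medium and $4-p$ large, and the numbered statements of Lemma~\ref{evenc4lemma} cover only $p=2$ (statement 3) and $p=1$ (statement 4); statement 2 needs \emph{five} sets. The splits $p=4$ and $p=3$ are not covered, and for good reason: four distinct sets of $\M\cup\L$ over an even ground set with at most one of them large can form a $\Psi$-configuration and hence fail to form a $C_4$, so no blanket statement ``any four such sets form a $C_4$'' is available there. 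Your proposed repair does not close this: promoting one $L_i$ by adjoining the new element lands you in the still-uncovered case $p=3$, while promoting two changes the intersection of the promoted pair (it now contains the new element $n+1$), so a system of distinct representatives found upstairs need not descend to $[n]$. The upward route can in fact be saved, but only by invoking the \emph{Claim} inside the proof of Lemma~\ref{evenc4lemma} (any four distinct sets of $\M\cup\L$ form a $C_4$ or a $\Psi$-configuration) together with the observation that four distinct subsets of $[n]$ cannot form a $\Psi$-configuration over $[n+1]$: the three non-stem sets would have to be $\frac{n+1}{2}$-subsets of an $\frac{n+3}{2}$-set containing $n+1$, hence all equal to the same set. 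That argument is not in your proposal.

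The paper's actual proof deletes a vertex instead, and the crux---which your fallback ``embed into $[n-1]$ where appropriate'' leaves unaddressed---is which vertex to delete and what to do when no good vertex exists. If some $j$ lies in at most two of the $L_i$, delete it: two sets remain large over $[n]\setminus\{j\}$, the other two are shrunk to distinct medium subsets avoiding $j$, Lemma~\ref{evenc4lemma}(3) applies, and the $C_4$ lifts because replacing a set by a superset only enlarges the pairwise intersections. If every $j$ lies in at least three of the $L_i$, then $\sum_i |L_i|\ge 3n$, which forces at most two of the $L_i$ to have the minimum size $\frac{n+1}{2}$; hence deleting \emph{any} vertex leaves at most two medium sets and at least two large ones, and the even lemma again applies. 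Without this counting step (or the $\Psi$-configuration analysis above), your argument does not go through.
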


\begin{proof}[Proof of Theorem \ref{c4theorem} from Lemmas \ref{evenc4lemma} and \ref{oddc4lemma}]

Let $n \ge 26$ be even, and let $Q$ be a partition of $\P(n)$ into the minimum number of $C_4$-free parts. Say $Q$ has $a$ parts with three sets in $\L$, $b$ parts with two sets in $\L$, $c$ with one, and $d$ with no sets in $\L$.  Lemma \ref{evenc4lemma} implies that these account for all the parts of $Q$, so $a + b + c + d = f(n, C_4)$. Moreover, Lemma \ref{evenc4lemma} implies the relations $|\L| = 3a + 2b + c$ and $|\M| \le b + 3c + 4d$. Since $|\M| = \binom{n}{n/2} = \Theta(2^n/\sqrt{n})$, this gives us (by $b + 3c + 4d \le {3b\over 2}+3c+{9d\over 2})$ that
\[ f(n, C_4) = a + b + c + d \ge \frac{1}{6}\left(2|\L| + \frac{4}{3}|\M|\right) = \frac{1}{6}\left(2^n + \frac{1}{3} |\M|\right) = \frac{2^{n-1}}{3}\left(1 + \Theta\left(\frac{1}{\sqrt{n}}\right)\right) .\]

For odd $n \geq 27$, again take such a minimal $C_4$-free partition of $\P(n)$. Each part has at most three sets in $\L$, so $f(n, C_4) \geq \frac{1}{3}|\L| = \frac{2^{n-1}}{3}$.

\end{proof}

In order to prove Lemma \ref{evenc4lemma}, we need the following definition:

\begin{definition}

Assume $n \ge 4$ is even. We say that four distinct sets $A, B, C, D \in \M \cup \L$ form a \emph{$\Psi$-configuration} if there exists some $x$ such that $A \cap B, A \cap C, A \cap D \subseteq \{x\}$. In such a configuration we call $A$ a \emph{stem}.

\end{definition}

Let us elaborate on the structure of a $\Psi$-configuration $A,B,C,D$. Suppose $A$ is a stem and $A \cap (B \cup C \cup D) \subseteq \{x\}$. Since $A,B,C,D$ are distinct sets in $\M \cup \L$, we have the inequalities $|A| \ge \frac{n}{2}$ and $|B \cup C \cup D| \ge \frac{n}{2} + 1$. But also,
\[n+1 \ge |A \cup (B \cup C \cup D)| + |A \cap (B \cup C \cup D)| = |A| + |B \cup C \cup D| \ge \frac{n}{2} + (\frac{n}{2} + 1) = n+1.\]
So in fact, $|A| = \frac{n}{2}$ and $|B \cup C \cup D| = \frac{n}{2} + 1$. That is to say, $A \in \M$, and $B,C,D$ are $\frac{n}{2}$- or $(\frac{n}{2}+1)$-subsets of the $(\frac{n}{2} + 1)$-set $([n] \setminus A) \cup \{x\}$. Based on this, it is easy to see that a stem of a $\Psi$-configuration is unique.

Also note that in this $\Psi$-configuration we have
\[|B \cap C| = |B| + |C| - |B \cup C| \ge \frac{n}{2} + \frac{n}{2} - (\frac{n}{2} + 1) = \frac{n}{2} - 1,\]
and similarly $|B \cap D|, |C \cap D| \ge \frac{n}{2} - 1$. Thus, the non-stem sets of a $\Psi$-configuration pairwise intersect in at least $\frac{n}{2} - 1$ elements. Finally, observe that a $\Psi$-configuration does not form a $C_4$.

\begin{proof}[Proof of Lemma \ref{evenc4lemma}]

Suppose $n\geq 26$ is even. We first prove the following claim:

\textbf{Claim:} Any four distinct $A, B, C, D \in \M \cup \L$ form either a $C_4$ or a $\Psi$-configuration.

\begin{proof}[Proof of Claim]


Suppose that $A,B,C,D$ do not form a $\Psi$-configuration. We wish to show that $A,B,C,D$ form a $C_4$

First assume that two of the sets, say $A$ and $C$, are complementary. Since the complement of any set is unique and our sets are in $\M \cup \L$, the intersections $A \cap B$ and $A \cap D$ are nonempty. Moreover, because $A \cap C = \emptyset$ and $A,B,C,D$ do not form a $\Psi$-configuration, $A \cap B$ and $A \cap D$ cannot be the same singleton set. Thus, there exist distinct representatives $x_1 \in A \cap B$, $x_2 \in A \cap D$. Similarly, there exist distinct representatives $x_3 \in B \cap C$, $x_4 \in C \cap D$. Clearly $x_1$ and $x_2$ are distinct from $x_3$ and $x_4$, since the first two are contained in $A$ while the second two are contained in $C = [n]\setminus A$. Thus $A, B, C, D$ form a $C_4$.

Now assume that $A, B, C, D$ are pairwise intersecting. Consider all perfect matchings $\{X_1, X_2\}, \{X_3, X_4\}$ (that is, partitions into sets of size 2) of $\{A,B,C,D\}$, and let $\{A,C\}, \{B,D\}$ be the one that minimizes $|(X_1 \cap X_2) \cup (X_3 \cap X_4)|$. We will show that if $A,B,C,D$ do not form a $C_4$ in that cyclic order, then there is another cyclic order of $A,B,C,D$ that forms a $C_4$. To do this, we use Hall's theorem on distinct representatives as we did in Lemma \ref{trianglelemma}. WLOG, the following are the only cases in which $A,B,C,D$ may fail to form a $C_4$ in that cyclic order:


\textbf{Case 1:} $|(A \cap B) \cup (B \cap C) \cup (C \cap D)| \le 3$. (Note that this case covers when $|(A \cap B) \cup (B \cap C) \cup (C \cap D)| \le 2$ and when $|(A \cap B) \cup (B \cap C) \cup (C \cap D) \cup (D \cap A)| \le 3$.) The intersections in this union must each be a subset of a 3-set $\{x_1,x_2,x_3\}$. By minimality of $|(A \cap C) \cup (B \cap D)|$, $A \cap C$ and $B \cap D$ are subsets of a 3-set $\{y_1,y_2,y_3\}$. It follows that the sets $X \setminus \{x_1,x_2,x_3,y_1,y_2,y_3\}$ for $X \in \{A,B,C\}$ are pairwise disjoint. Counting the number of elements in $[n]$ outside of $\{x_1,x_2,x_3,y_1,y_2,y_3\}$, we get the inequality
\[ 3\left(\frac{n}{2} - 6\right) \le n - 6 ,\]
from which it follows that $n \le 24$. Since we assumed that $n \ge 26$, this is impossible.

\textbf{Case 2:} $|(A \cap B) \cup (C \cap D)| \le 1$. Since our sets are pairwise intersecting, $A \cap B = C \cap D = \{x\}$ for some $x$. Then $x \in A \cap C$ and $x \in B \cap D$. Since we assumed that $|(A \cap C) \cup (B \cap D)|$ is minimal, it follows that $(A \cap C) \cup (B \cap D) = \{x\}$. But then
\begin{align*}
	(A \cup D) \cap (B \cup C) = (A \cap B) \cup (C \cap D) \cup (A \cap C) \cup (B \cap D) = \{x\},
\end{align*}
from which we get that
\begin{align*}
	n + 1 \ge |(A \cup D) \cup (B \cup C)| + |\{x\}| = |A \cup D| + |B \cup C|  \ge \left(\frac{n}{2} + 1\right) + \left(\frac{n}{2} + 1\right) = n + 2,
\end{align*}
a contradiction.

\textbf{Case 3:} $|(A \cap B) \cup (A \cap D)| \le 1$. Similar to case 3, $A \cap B = A \cap D = \{x_1\}$ for some $x_1$. Since $A,B,C,D$ do not form a $\Psi$-configuration, there must be some $x_2 \in A \cap C$ different from $x_1$. Now, $C \cap D$ cannot be $\{x_1\}$ because otherwise $A \cap B = C \cap D = \{x_1\}$, which we showed is an impossible circumstance in Case 2. Moreover, $C \cap D$ cannot contain $x_2$ because otherwise $x_2 \in A \cap D = \{x_1\}$. Thus, there must be some $x_3 \in C \cap D$ different from $x_1$ and $x_2$. Finally, note that
\begin{align*}
	n &\ge |A \cup B \cup D| \\
	&= |A| + |B| + |D| - |A \cap B| - |A \cap D| - |B \cap D| + |A \cap B \cap D| \\
	&\ge \frac{n}{2} + \frac{n}{2} + \frac{n}{2} - 1 - 1 - |B \cap D| + 1 \\
	&= \frac{3n}{2} - 1 - |B \cap D|,
\end{align*}
from which we get that $|B \cap D| \ge \frac{n}{2} - 1 \ge 4$. So there must be some $x_4 \in B \cap D$ different from each of $x_1,x_2,x_3$. It follows that $A,C,D,B$ form a $C_4$ in that cyclic order.

This concludes the proof of the claim.

\end{proof}

Now we prove the statements of Lemma \ref{evenc4lemma}. Observe that, similar to Lemma \ref{trianglelemma}, statement 2 follows from statement 1, and statement 4 follows from statement 3. So we prove statements 1 and 3. 

Statement 3 follows immediately from the observations about $\Psi$-configurations, specifically that their stem must be an $\frac{n}{2}$-set, and their non-stem sets must be subsets of an $(\frac{n}{2} + 1)$-set, say $X$. There is only one set in $\L$ that could be part of such a configuration, namely $X$ itself. Thus, it is impossible for distinct $M_1, M_2 \in \M, L_1, L_2 \in \L$ to form a $\Psi$-configuration. By the claim, they must form a $C_4$.

For statement 1, first consider the sets $M_1, M_2, M_3, M_4$. If they form a $C_4$, then we are done; otherwise, they must form a $\Psi$-configuration. Say that $M_1$ is the stem. Next consider the sets $M_1, M_2, M_3, M_5$. Again we are done if they form a $C_4$; otherwise, they must form another $\Psi$-configuration. $M_1$ must again be the stem because $M_1 \cap M_2$, $M_1 \cap M_3$ have at most one element, and we have seen that the non-stem sets of $\Psi$-configurations must pairwise intersect in at least $\frac{n}{2} - 1$ elements. So finally consider the sets $M_2, M_3, M_4, M_5$. They are all non-stem sets in our previous two configurations, so $|M_i \cap M_j| \ge \frac{n}{2} - 1$ for all distinct $i,j \in \{2,3,4,5\}$. Thus $M_2,M_3,M_4,M_5$ form a $C_4$.

\end{proof}

Note that in case 1 of the proof of the claim, the required lower bound on $n$ of 26 is not tight because the $x_i$'s and $y_i$'s considered in the proof may not all be distinct. This bound can definitely be reduced, but doing so requires extra casework.

Now we prove Lemma \ref{oddc4lemma} from Lemma \ref{evenc4lemma}.

\begin{proof}[Proof of Lemma \ref{oddc4lemma}]

Let $n \geq 27$ be odd, and let $L_1, L_2, L_3, L_4 \in \L$, meaning that $|L_i| \geq \frac{n+1}{2}$. We break into two cases.

Suppose there exists $j \in [n]$ such that $j$ is in at most two of the $L_i$. Without loss of generality, $j \not\in L_3, L_4$. This means that $L_3$ and $L_4$ are sets of size at least $\frac{n+1}{2} = \frac{n-1}{2} + 1$ contained in a set of size $n-1$ (namely, $[n] \setminus \{j\}$). Let $M_1 \subseteq L_1 \setminus \{j\}$ and $M_2 \subseteq L_2 \setminus \{j\}$ be distinct sets of size $\frac{n-1}{2}$, which are necessarily contained in the same set of size $n-1$ as before (namely, $[n] \setminus \{j\}$). We may then consider $L_3$ and $L_4$ to be in $\L$ and $M_1$ and $M_2$ to be in $\M$ in the sense that Lemma \ref{evenc4lemma}(3) applies in $[n]\setminus \{j\}$: since these sets are distinct, they form a $C_4$.

Otherwise, suppose every $j \in [n]$ is in at least three of the $L_i$. This implies $\sum |L_i| \geq 3n$. No three of the $L_i$ can have size exactly $\frac{n+1}{2}$, since this would imply that the fourth set has size at least $3n - 3\frac{n+1}{2} = \frac{3n}{2} - \frac{3}{2} > n$, an impossibility. Thus at most two of the $L_i$ have size $\frac{n+1}{2}$, meaning that (as in the preceding paragraph) upon the removal of any vertex there are at most two sets of size $\frac{n-1}{2}$. In a similar fashion as the previous paragraph, Lemma \ref{evenc4lemma}(3) implies that these sets form a $C_4$.

\end{proof}

\section{Proof of Theorem \ref{asymcycle}}

Here we construct a partition of $\P(n)$ where almost all of the sets are in parts of size $2(k-1)$. In fact, these parts of size $2(k-1)$ consist of $k-1$ sets of size less than $n/2$, and $k-1$ sets of size at least $n/2$, in such a way that all of the larger sets are disjoint from the smaller sets. The sets not in parts of size $2(k-1)$ can be placed arbitrarily in parts of size at most $k-1$. This partition is $G$-free since the intersection graph of any partition class has connected components with at most $k-1$ vertices. We assume that $n\ge 2(k-1)$.

Define $A_{m,r} := \{A \in \binom{[n]}{m} : \sum_{a \in A} a \equiv r \Mod n\}$. Since $\sum_{r=0}^{n-1} |A_{m,r}| = \binom{n}{m}$, there exists some $r_m$ such that $|A_{m,r_m}| \geq \frac{1}{n}\binom{n}{m}$. Fix these $r_m$ for $k-1 \leq m < n/2$. We construct a part in our partition from each $A \in A_{m,r_m}$ for $k-1 \leq m < n/2$.

Let $A \in A_{m,r_m}$ and enumerate $A=\{a_0,\dots,a_{m-1}\}$  and $B=[n]\setminus A=\{b_0,\dots,b_{n-m-1}\}$. For integers $i$ with $0\leq i\leq\lfloor\frac{m}{k-1}\rfloor-1$, construct the part consisting of the sets
\[ A\setminus\{a_{(k-1)i}\}, A\setminus\{a_{(k-1)i+1}\},\dots, A\setminus\{a_{(k-1)i+(k-2)}\} ,\]
\[ B\setminus\{b_{(k-1)i}\}, B\setminus\{b_{(k-1)i+1}\},\dots, B\setminus\{b_{(k-1)i+(k-2)}\} .\]

The sets of the form $A \setminus \{a_j\}$ (in the first line) are all different. Indeed, suppose $A\setminus\{a_j\}=A'\setminus\{a'_{j'}\}$, so necessarily $|A|=|A'|$.  Also, $\sum_{a_i\in A\setminus\{a_j\}}a_i\equiv\sum_{a'_i\in A'\setminus\{a'_{j'}\}}a_i' \Mod n$.  This implies $-a_j+\sum_{a_i\in A}a_i\equiv-a'_{j'}+\sum_{a'_i\in A'}a'_i \Mod n$.  By construction, this is equivalent to $r_{|A|}-a_j\equiv r_{|A'|}-a'_{j'} \Mod n$, and thus $a_j\equiv a'_{j'} \Mod n$. This means that $a_j=a'_{j'}$, which together with $A\setminus\{a_j\}=A'\setminus\{a'_{j'}\}$ implies that $A=A'$. Thus the two sets were the same. Analogous reasoning concludes that the sets appearing in the second line are also all different. Finally, for any $m$, sets in the first line have size less than ${n\over 2}-1$, while in the second line the sets have size at least ${n\over 2}-1$. Therefore the constructed sets are all different in any part.

For each $A \in A_{m,r_m}$, there are $\floor{\frac{m}{k-1}}$ possible values of $i$ in the construction, that is, $\floor{\frac{m}{k-1}}$ different parts that $A$ generates. Since $\binom{n}{\floor{n/2}} = \Theta(2^n/\sqrt{n})$, this construction creates at least
$$\sum_{m=k-1}^{\lceil\frac{n}{2}\rceil-1}\left\lfloor\frac{m}{k-1}\right\rfloor\frac{1}{n}\binom{n}{m}\geq\frac{2^n}{2(k-1)}\left(1 - \frac{c}{\sqrt{n}}\right)$$
parts of size $2(k-1)$, for some constant $c>0$ not depending on $n$ or $k$.

Since this construction yields at least $\frac{2^n}{2(k-1)}\left(1-c/\sqrt{n}\right)$ parts of size $2(k-1)$, there are at least $2^n\left(1-c/\sqrt{n}\right)$ sets placed in parts this way. Thus at most $2^n-2^n(1-c/\sqrt{n})=2^n\left(c/\sqrt{n}\right)$ sets have not been placed into a part. We place these remaining sets arbitrarily into parts of size $k-1$ (with one possible smaller part).  Partitioning the rest this way generates at most $\frac{2^n}{k-1}\left(c/\sqrt{n}\right)+1$ additional parts.  Thus, in total the partition will have at most $\frac{2^n}{2(k-1)}\left(1-c/\sqrt{n}\right)+\frac{2^n}{k-1}\left(c/\sqrt{n}\right)+1=\frac{2^n}{2(k-1)}\left(1+\Theta\left(1/\sqrt{n}\right)\right)$ parts.

\section{Bounds on $f(6, S_3)$ and $f(9, S_3)$}\label{designs}

\begin{proposition}$f(6,S_3)\le 15$.
\end{proposition}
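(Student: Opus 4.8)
The goal is to exhibit an explicit partition of $\P^*(6)$ (equivalently $\P(6)$, since empty sets and singletons are irrelevant to the $S_3$-free property) into $15$ parts, each of which is claw-free. The plan is to exploit the same phenomenon highlighted after Proposition~\ref{claw}: three edges meeting at a common vertex $v$ fail to form an $S_3$ precisely when they are of the nested form $\{v,x,y\},\{v,x\},\{v,y\}$. The base partition from Proposition~\ref{basepart} uses $2^{n-2}=16$ parts, so I need to save exactly one part by merging; the hope is that a carefully chosen design on $[6]$ lets several parts each carry three pairwise-intersecting sets whose common-vertex structure is always of this harmless nested type.

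First I would recall the standard $16$-part construction for $n=6$: for each $A\subseteq[5]$ (with $A$ and $[5]\setminus A$ identified), the part $\{A,\,[6]\setminus A,\,A\cup\{6\},\,[6]\setminus(A\cup\{6\})\}$ is $S_3$-free because within each part the relevant pairwise intersections are empty. The idea is to break up these rigid four-sets and repackage the $2^6=64$ subsets (or the $64-1-6=57$ nontrivial ones) into $15$ claw-free classes, where most classes have size four but at least one class is enlarged to size five or six using the nested-triple loophole. Concretely, I would look for a small collection of \emph{sunflower-like} or \emph{design} structures on $[6]$ in which any vertex $v$ that lies in three sets of a class sees those three sets as a chain/near-chain, so that no genuine claw (three edges with a system of distinct representatives centred at $v$) can appear.

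The key steps, in order, are: (i) identify which subsets can safely be grouped so that at every vertex the local configuration is claw-free — this amounts to controlling, for each part $\{S_1,\dots,S_t\}$ and each vertex $v$, the family $\{S_i : v\in S_i\}$ and forbidding three of them from admitting distinct representatives; (ii) use a specific combinatorial design on $[6]$ (e.g. a resolvable or near-resolvable system, or a packing of pairs/triples) to lay out the larger classes; (iii) verify that the leftover sets fit into the remaining classes while keeping the total at $15$; and (iv) check claw-freeness of every class by the distinct-representatives criterion, case-checking over the (few) vertices of degree $\ge 3$ in each class's intersection structure. Since $n=6$ is tiny, much of this can be done by direct construction and inspection rather than asymptotics.

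The main obstacle I expect is the bookkeeping of step (iv): ensuring that when I enlarge a class beyond size four to save a part, \emph{every} vertex shared by three or more sets of that class forces the nested pattern and never a true claw. Unlike the base construction, where pairwise intersections are empty and claw-freeness is automatic, the merged classes will have genuinely intersecting sets, so I must verify for each such class that no three of its sets have intersections with a system of distinct representatives through a common vertex. I anticipate that the cleanest route is to write down an explicit $15$-part partition (analogous to the explicit display~(\ref{f5}) used for $n=5$) and then verify claw-freeness class by class; finding the right explicit partition — likely guided by a $2$-design or a parallel-class decomposition of the $3$-subsets of $[6]$ — is the creative crux, after which the verification is routine but must be carried out carefully.
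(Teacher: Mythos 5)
Your proposal correctly identifies the phenomenon to exploit (the nested configuration $\{v,x,y\},\{v,x\},\{v,y\}$ that meets at $v$ without forming a claw) and correctly frames the verification criterion, but it stops exactly where the proof begins: the statement $f(6,S_3)\le 15$ is purely constructive, and you never write down the partition. You acknowledge this yourself (``finding the right explicit partition \dots is the creative crux''), so as it stands the proposal is a plan for a proof rather than a proof, and the missing piece is the entire content of the proposition.

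For comparison, the paper's construction is organized by degrees rather than by class sizes. Summing $|S|$ over all $S\in\P^*(6)$ gives $\sum_{m=2}^{6}m\binom{6}{m}=186$; a class in which every vertex of $[6]$ has degree at most $2$ is automatically claw-free and contributes at most $12$ to this sum, so $16$ such classes would be needed. The saving comes from a single $3$-regular class $X=\{123,456,12,13,23,45,46,56\}$, which contributes $18$: at each vertex the three incident sets are exactly of the harmless nested type (e.g.\ at vertex $1$ they are $123,12,13$), so $X$ is claw-free, and $18+14\cdot 12=186$ leaves room for $14$ further $2$-regular classes. These are then assembled concretely from complementary pairs of triples, the complements of the $1$-factors of a $1$-factorization of $K_6$, a $6$-cycle, and a few ad hoc classes. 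Note that this differs from your guess of ``one class enlarged to size five or six'': the special class has size eight, and all other classes are $2$-regular so that no claw-freeness check beyond degree counting is needed for them. To complete your argument you would need to produce such an explicit list and verify it covers all $57$ sets of $\P^*(6)$ exactly once.
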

\begin{proof}
Let $X=\{123,456,12,13,23,45,46,56\}$ be one ($3$-regular but claw-free) partition class. All other classes will be $2$-regular (thus automatically claw-free). Let $Y_1,Y_2,Y_3,Y_4$ contain two pairs of complementary triples, not using the pair $123,456$. Then define $$Z_1=\{14,23456,12356\}, Z_2=\{25,13456,12346\}, Z_3=\{36,12456,12345\}.$$
Let $U_1,U_2,U_3,U_4,U_5$ be defined as the complementary sets of the $1$-factors in a $1$-factorization of $K_6$. Then $W$ is defined by the edges of the $6$-cycle $1,5,3,4,2,6,1$ and $R$ contains $[6]$ together with the one complementary pair of triples not used in $X$ and in $Y_i$. Now we have 15 claw-free partition classes of $\P^*(6)$.
\end{proof}

\begin{proposition} $f(9,S_3)\le 126$.
\end{proposition}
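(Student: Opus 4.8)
The plan is to give an explicit partition of $\P^*(9)$ into $126$ claw-free parts, guided by a degree count that tells us exactly how efficient the parts must be. Recall two structural facts already isolated in the text. First, as in the proof of Proposition \ref{claw} with $k=3$, any three distinct edges of size at least $3$ sharing a common vertex always form a claw; hence in a claw-free part the subhypergraph of edges of size $\ge 3$ has maximum degree at most $2$. Second, the only way a vertex $v$ can lie in three edges of a claw-free part is the special configuration $\{v,x\},\{v,y\},\{v,x,y\}$. In particular, every part with maximum degree at most $2$ (a ``$2$-regular'' part) is automatically claw-free, and a part may exceed degree $2$ at $v$ only by spending a triple together with two of its pairs at $v$.

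I would first set up the accounting. Writing $d(v)$ for the degree of $v$ in a part, a claw-free part $P$ satisfies $\sum_v d(v) \le 18 + e(P)$, where $e(P)$ is the number of degree-$3$ vertices in $P$. Since $\sum_{j\ge 2} j\binom{9}{j} = 9\cdot 2^{8} - 9 = 2295$, a partition into $126$ parts forces $2295 \le 18\cdot 126 + \sum_P e(P) = 2268 + \sum_P e(P)$, so the parts must jointly contain at least $27$ vertices of degree $3$. By the special-configuration fact, the cheapest supply is three fully $3$-regular parts, each contributing $9$ such vertices. This is where the design enters: a fully $3$-regular claw-free part must, at every vertex, be the configuration $\{v,x\},\{v,y\},\{v,x,y\}$, forcing it to consist of three disjoint triples partitioning $[9]$ together with their $9$ internal pairs. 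The unique Steiner triple system $STS(9)=AG(2,3)$ supplies these: its $12$ triples partition all $36$ pairs of $[9]$ and resolve into $4$ parallel classes, each a partition of $[9]$ into $3$ triples. Taking each parallel class with its $9$ internal pairs yields four $3$-regular claw-free parts that between them absorb all $36$ pairs and $12$ triples.

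With the $3$-regular parts fixed, the remaining $454$ sets of size $\ge 3$ must be packed into $2$-regular parts, and the count $2295 - 4\cdot 27 = 2187 \le 18\cdot 122$ shows there is room for exactly $122$ of them (total degree-sum slack only $9$). The bulk is handled by complementary pairs: grouping two complementary pairs $\{A,\overline A,B,\overline B\}$ gives a $2$-regular part of size $4$. I would pair the $4$-sets with the $5$-sets, and the remaining triples with their complementary $6$-sets, in this way; the $12$ leftover $6$-sets (complements of the $12$ used triples) come three per parallel class, so each parallel class of $STS(9)$ also contributes a $2$-regular part of three $6$-sets.

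The hard part will be the largest sets, whose complements lie outside $\P^*(9)$ or have already been consumed: the $36$ $7$-sets, the $9$ $8$-sets, and $[9]$ itself. These cannot be packed efficiently using only large sets -- three $7$-sets, or three $8$-sets, always share a vertex and hence form a claw, and $[9]$ admits no two disjoint large companions -- so they must be combined with disjoint small ``filler'' sets inside $2$-regular parts. The workable gadgets are: parts $\{A,B,C\}$ with $A,B$ two $7$-sets and $C$ the $4$-set equal to the union of the two pairs they omit (this is $2$-regular); pairs of $8$-sets joined by a single reserved $2$-set; and a small near-$2$-regular part carrying $[9]$. Making these gadgets fit is the crux: it requires reserving a handful of pairs (rather than spending all $36$ on the $3$-regular parts), reusing some $4$-sets as fillers, and absorbing the small surplus of orphaned $5$- and $6$-sets, all while keeping the total at exactly $126$ parts that are each $2$-regular or a legitimate special configuration. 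With degree-sum slack of only $9$ there is little margin, so the main work is a careful \emph{simultaneous} allocation of pairs and $4$-sets to the $7$-, $8$-, and $9$-set gadgets, rather than any single clever step.
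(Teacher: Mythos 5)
Your setup is sound and your opening move coincides with the paper's: the four parallel classes of the Steiner triple system on $[9]$, each taken together with the nine pairs inside its triples, give exactly the paper's four $3$-regular classes $X_1,\dots,X_4$, and your handling of the remaining triples, $4$-, $5$-, and $6$-sets via complementary pairs and triple-of-$6$-set covers is an acceptable (slightly rearranged) version of the paper's classes $Y_i$, $Z_i$, $T_i$. But the proof is not complete, and the place where it stops is precisely the heart of the matter. After your allocation of the small and medium sets you have used $4+4+36+63=107$ parts, leaving $19$ for the $36$ seven-sets, the $9$ eight-sets, and $[9]$. By your own constraints (at most two $7$-sets per part, at most two $8$-sets per part, no part with two $7$-sets and an $8$-set, $[9]$ essentially alone) these $46$ sets need at least $18+5+1=24$ parts, so the scheme as written exceeds $126$ by five. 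You acknowledge this by proposing to ``reserve pairs'' and ``reuse $4$-sets,'' but that simultaneous reallocation is exactly the nontrivial construction, and leaving it as an unresolved allocation problem means the bound is not established.

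The paper resolves it without reserving any $2$-sets, by diverting $18$ of the $126$ complementary $(4,5)$-pairs into $27$ two-regular classes of two new types: nine classes $U_i=\{A_i\cup\{i\},\,B_i\cup\{i\},\,[9]\setminus\{i\}\}$ of type $(5,5,8)$, where $A_i,B_i$ partition $[9]\setminus\{i\}$ into two $4$-sets (the two $5$-sets meet only in $i$, and every other vertex lies in exactly one of them and in the $8$-set); and eighteen classes of type $(4,7,7)$ such as $\{A_i,\,[9]\setminus\{i+1,i+2\},\,[9]\setminus\{i+3,i+6\}\}$, where the two omitted pairs partition the $4$-set $A_i$, chosen so that over all $i$ the omitted pairs run through all $36$ pairs of $[9]$ and hence all $36$ seven-sets appear exactly once. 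This packs $\{18$ four-sets, $18$ five-sets, $36$ seven-sets, $9$ eight-sets$\}$ into $27$ parts instead of the $32$ your scheme would need, which is exactly the saving of five parts required to land on $4+12+28+9+18+54+1=126$. To repair your write-up you would need to supply such an explicit family of $(5,5,8)$ and $(4,7,7)$ covers (or an equivalent device); without it the claim $f(9,S_3)\le 126$ is not proved.
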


\begin{proof} Take a partition $Q$ of $[9]\choose 3$ into $28$ classes, each containing three pairwise disjoint triples - a very special case of Baranyai's theorem \cite{BA}. However, we need another property of $Q$: four of these classes $X_1,X_2,X_3,X_4$ must form a Steiner triple system. Then these can be extended by the nine pairs covered by their triples implying that  $X_1\cup X_2 \cup X_3\cup X_4$ covers each pair of $[9]$ exactly once. The existence of these $X_i$-s certainly follows from a much stronger result, stating that $[9]\choose 3$ can be partitioned into seven Steiner triple systems (but probably there are easier ways to get them). Then the $X_i$-s provide four claw-free $3$-regular partition classes. Partition classes $Y_i$ can be defined by  putting together $12$  pairs of the remaining $24$ classes of $Q$, they form a double cover of $[9]$. Next we can define $28$ partition classes $Z_i$ by the complements of the $28$ classes of $Q$, each of them forms a double cover on $[9]$.

Next we design $9$ double covers of type $(5,5,8)$ and $18$ double covers of type $(4,7,7)$. To prepare, set $A_i=\{i+1,i+2,i+3,i+6\}$, $B_i=\{i+4,i+5,i+7,i+8\}$ with arithmetic mod 9. Then $9$ double covers of $[9]$ are defined as $U_i=\{A_i\cup i,B_i\cup i, A_i\cup B_i\}$. Set $$C_i=[9]\setminus \{i+1,i+2\}, D_i=[9]\setminus \{i+3,i+6\},$$ $$ E_i=[9]\setminus \{i+4,i+8\}, F_i=[9]\setminus \{i+5,i+7\}.$$ Then $2\times 9$  double covers of $[9]$ are defined as $W_i=\{A_i,C_i,D_i\}$ and $R_i=\{B_i,E_i,F_i\}$.  Note that $U_i,W_i,R_i$ take care of $18$ complementary pairs of sizes $4$ and $5$. The remaining ${9\choose 4}-18$ such pairs can be placed into $54$ partition classes $T_i$ forming double covers on $[9]$.  Finally, $[9]$ alone forms a partition class (leaving some hope of improvement).

Altogether we have $4+12+28+9+18+54+1=126$ claw-free partition classes of $\P^*(9)$.
\end{proof}

\section{Acknowledgement}

This paper was written under the auspices of the Budapest Semesters in Mathematics program during the Fall semester of 2018.

\end{document}